\newcommand{\R}{\mathbb{R}}
\newcommand{\Z}{\mathbb{Z}}
\newcommand\Field{\mathbb F}
\newcommand{\im}{\mathrm{im}}
\newcommand{\Ht}{\mathrm{H}}
\newcommand\x{\mathbf x}
\newcommand{\ti}{\widetilde}
\newcommand{\ra}{\rightarrow}
\newtheorem{thm}{Theorem}[section]
\newtheorem{cor}[thm]{Corollary}
\newtheorem{lem}[thm]{Lemma}
\newtheorem{defn}[thm]{Definition}
\newtheorem{remark}[thm]{Remark}
\def\endproof{\relax\ifmmode\expandafter\endproofmath\else
  \unskip\nobreak\hfil\penalty50\hskip.75em\hbox{}\nobreak\hfil\bull
  {\parfillskip=0pt \finalhyphendemerits=0 \bigbreak}\fi}
\def\endproofmath$${\eqno\bull$$\bigbreak}
\def\bull{\vbox{\hrule\hbox{\vrule\kern3pt\vbox{\kern6pt}\kern3pt\vrule}\hrule}}
\providecommand\@dotsep{5}
\def\listtodoname{List of Todos}
\def\listoftodos{\@starttoc{tdo}\listtodoname}
\newcommand{\fmap}{\mathfrak{f}}
\newcommand{\wu}{\mathfrak{u}}
\newcommand{\ord}{\mathrm{ord}}
\newcommand{\Cross}{\mathfrak{C}}
\newcommand{\CBn}{\mathcal{C}_{BN}}
\newcommand{\HBn}{\mathcal{H}_{BN}}
\newcommand{\dBn}{{\delta}_{BN}}
\newcommand{\grh}{\mathrm{gr}_h}
\newcommand{\grq}{\mathrm{gr}_q}
\newcommand{\alg}{\mathbf A}
\newcommand{\hsf}{\mathsf{h}}
\begin{document}

\title{The Bar-Natan homology and unknotting number}%
\author{Akram Alishahi}
\thanks{The author was supported by NSF Grant DMS-1505798.}
\address{Department of Mathematics, Columbia University, New York, NY 10027}
\email{alishahi@math.columbia.edu}

\date{\today}

\begin{abstract}
We show that the order of torsion homology classes in Bar-Natan deformation of Khovanov homology is a lower bound for the unknotting number. We give examples of knots that this is a better lower bound than $|s(K)/2|$, where $s(K)$ is the Rasmussen $s$ invariant defined by the Bar-Natan spectral sequence. \end{abstract}
%\todo{Find slice knots with interesting invariant.} 
\maketitle
\tableofcontents

%%%%%%%%%%%%%%%%%
%%%%%%%%%%%%%%%%%
\section{introduction}
In \cite{Kh-Intro}, Khovanov introduced a knot (and link) invariant which categorifies the Jones polynomial, now known as \emph{Khovanov homology}. This invariant is constructed by applying a specific TQFT to the cube of resolutions corresponding to a projection of the knot. Using a different TQFT, Bar-Natan defined a deformation of Khovanov homology in \cite{BarN}, which we will work with in this paper. The goal is to describe a lower bound for the unknotting number in terms of the \emph{h-torsion} in the Bar-Natan chain complex.

%\todo{Write a brief history about Khovanov homology and the construction of Bar-Natan chain complex.}
A homology class $\alpha\in\HBn(K)$ is called \emph{torsion} if $h^n.\alpha=0$ for a positive integar $n$. The smallest $n$ with this property is called \emph{order} of $\alpha$, denoted by $\ord(\alpha)$. Let $T_{BN}(K)$ denotes the set of torsion classes in $\HBn(K)$.

\begin{defn}\label{def:inv}
For an oriented knot $K$ in $\R^3$, we define 
\[\wu(K):=\max_{\alpha\in T_{BN}(K)}\ord(\alpha).\] 
\end{defn}
%\todo{Do we need oriented knot?}

\begin{thm}\label{thm:lbunknotting}
For any oriented knot $K$, $\wu(K)$ is a lower bound for the unknotting number of $K$.
\end{thm}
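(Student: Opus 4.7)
The plan is to show that an unknotting sequence of length $n$ produces chain maps between $\CBn(K)$ and $\CBn(U)$ (where $U$ is the unknot) whose composition is multiplication by $h^n$, and then to leverage the fact that $\HBn(U)$ is $h$-torsion free.

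First, I would analyze a single crossing change locally. Let $K_+$ and $K_-$ be two knots differing at exactly one crossing $c$. Passing to the oriented resolution $K_0$ at $c$ via a saddle, and then reintroducing the opposite crossing via a second saddle, gives a pair of two-saddle cobordisms that can be composed in either direction. The Bar-Natan TQFT turns these into chain maps
\[ \phi:\CBn(K_+)\to\CBn(K_-),\qquad \psi:\CBn(K_-)\to\CBn(K_+). \]
The crucial local lemma is that
\[ \psi\circ\phi\;\simeq\; h\cdot\mathrm{id}_{\CBn(K_+)},\qquad \phi\circ\psi\;\simeq\; h\cdot\mathrm{id}_{\CBn(K_-)}, \]
as chain maps up to homotopy. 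I expect to establish this by a direct computation in the local tangle category at $c$: the composition is an eight-resolution local map which, after neck-cutting and applying the Bar-Natan Frobenius relation $x^{2}=hx$, simplifies to $h$ times the identity tangle.

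Second, I would iterate along an unknotting sequence $K=K_{0},K_{1},\ldots,K_{n}=U$ of length $n=u(K)$. Composing the single-crossing maps produces chain maps
\[ \Phi:\CBn(K)\to\CBn(U),\qquad \Psi:\CBn(U)\to\CBn(K), \]
and Step~1 gives $\Psi\circ\Phi\simeq h^{n}\cdot\mathrm{id}$. Passing to homology yields the factorization $h^{n}\cdot\mathrm{id}_{\HBn(K)}=\Psi_{*}\circ\Phi_{*}$ through $\HBn(U)$.

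Third, I would use the standard computation $\HBn(U)\cong \Field[h]\oplus\Field[h]$, which is a free, hence $h$-torsion free, $\Field[h]$-module. Now let $\alpha\in\HBn(K)$ be a torsion class of order $k=\ord(\alpha)$, so $h^{k-1}\alpha\neq 0$ but $h^{k}\alpha=0$. Then
\[ h^{k}\Phi_{*}(\alpha)=\Phi_{*}(h^{k}\alpha)=0 \]
in the torsion-free module $\HBn(U)$, forcing $\Phi_{*}(\alpha)=0$, and hence $h^{n}\alpha=\Psi_{*}\Phi_{*}(\alpha)=0$. Therefore $k\leq n=u(K)$, and taking the maximum over torsion classes yields $\wu(K)\leq u(K)$.

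The main obstacle is the chain-level identity $\psi\phi\simeq h\cdot\mathrm{id}$ from Step~1. Morally it reflects the principle that a genus-one piece in a Bar-Natan cobordism evaluates to $h$, but $\phi$ and $\psi$ are not induced by single smooth cobordisms; they are sums of maps indexed by the cube of resolutions at the changing crossing. Producing the explicit chain homotopy therefore requires careful bookkeeping of the local cube and its interaction with the rest of the complex, and I expect this local combinatorial calculation to be the technical heart of the proof.
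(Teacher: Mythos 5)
Your overall architecture matches the paper's: produce chain maps across a crossing change whose compositions give multiplication by $h$, iterate through an unknotting sequence, and use that $\HBn(\text{unknot})\cong\Field[h]\oplus\Field[h]$ is $h$-torsion free (your Step~3 is a tidy repackaging of Lemma~\ref{lem:Algver} together with $\wu(\text{unknot})=0$). The genuine gap is Step~1, which you correctly flag as the technical heart but whose construction, as described, does not work.

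First, the ``two-saddle cobordism'' from $K_+$ through the oriented resolution to $K_-$ does not exist as an embedded cobordism: a band attached near $c$ to smooth the crossing necessarily twists around the over-strand and yields a diagram for $K_-\# H$ ($H$ the Hopf link), not the oriented resolution. This is exactly the cobordism used in Section~\ref{sec:cobdescrip}, and the paper's $f_c^+$ is that cobordism map composed with an algebraic (not cobordism-induced) projection $\CBn(K_-\# H)\to\CBn(K_-)$. Second, the naive algebraic analogue fails outright: writing $\CBn(K_\pm)$ as mapping cones of $\fmap:\CBn(K_0)\to\CBn(K_1)$ and $\bar{\fmap}$, the map ``project to the $K_0$-summand, then include'' is $\phi(a_0,a_1)=(0,a_0)$, the analogous map back is $\psi(b_1,b_0)=(0,b_1)$, and both $\psi\circ\phi$ and $\phi\circ\psi$ are zero, not $h\cdot\mathrm{id}$. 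The paper's map $f_c^+(a_0,a_1)=\left((\x_p+\x_q)(a_1),a_0\right)$ carries an essential correction term built from basepoint operators at points $p,q$ on the two arcs flanking the crossing; the relations $(\x_p+\x_q)\circ\fmap=0$ and $\bar{\fmap}\circ(\x_p+\x_q)=0$ make it a chain map, and Lemma~\ref{changep} (that $\x_p+\x_q$ is chain homotopic to multiplication by $h$) is what makes the round trip give $h$ on homology. Your instinct that a local neck-cutting computation in the cube of resolutions is needed is right, but that computation produces these corrected maps rather than the two-saddle maps you propose.
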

Let $K_+$ and $K_-$ be knot diagrams that differ in a single crossing $c$, which is a positive crossing in $K_+$ and a negative crossing in $K_-$. We prove Theorem \ref{thm:lbunknotting} in by introducing chain maps 
\begin{equation}\label{eq:chmap}
f_c^{+}:\CBn(K_+)\to\CBn(K_-)\ \ \ \ \text{and}\ \ \ \ f_{c}^-:\CBn(K_-)\to\CBn(K_+)
\end{equation}
such that the induced maps by $f_c^-\circ f_c^+$ and $f_c^+\circ f_c^-$ on $\HBn(K_+)$ and $\HBn(K_-)$, respectively, are equal to multiplication by $h$. In \cite{AD-KM}, Dowlin and the author introduce similar chain maps for Lee homology and prove Knight Move Conjecture \cite{Kh-Intro,BarN2} for knots with unknotting number smaller than $3$.

Despite the algebraic definition of the chain maps (\ref{eq:chmap}), we show that they can be described in terms of cobordism maps associated to specific cobordisms from $K_+$ to $K_-\#H$, and $K_-$ to $K_+\#mH$, where $H$ is the right-handed Hopf link and $mH$ is its mirror. In \cite{AE-2}, Eftekhary and the author use corresponding cobordism maps for knot Floer homology to deduce a lower bound for the unknotting number, in terms of the order of torsion classes in variants of knot Floer homology.

%\todo{Add an organization}
This paper is organized as follows. Section \ref{sec:background} reviews Bar-Natan chain complex and collects some results we will need later. Section \ref{unknotting-bound} proves Theorem~\ref{thm:lbunknotting}. Section~\ref{sec:cobdescrip} gives a geometric description, using cobordism maps, for the chain maps, defined algebraically,  in the process of proving Theorem~\ref{thm:lbunknotting} in Section \ref{unknotting-bound}. Finally, Section~\ref{sec:Exam} gives examples of knots for which our invariant (Definition~\ref{def:inv}) is a better lower bound comparing to the $s$-invariant i.e. $\wu(K)>|s(K)|/2$.

{\it Acknowledgements.} The author would like to thank Robert Lipshitz for his comments, suggestions and helpful conversation; thanks also to Nathan Dowlin for helpful input.

%%%%%%%%%%%%%%%%%%
%%%%%%%%%%%%%%%%%%
\section{Background}\label{sec:background}
In this section we review the Bar-Natan chain complex, describe its module structure and discuss some of its basic properties. 

\subsection{Bar-Natan's deformation of Khovanov homology} Let $K$ be an oriented knot or link diagram in $\R^2$ with $n$ crossings. Denote the set of crossings in $K$ by $\Cross=\{c_1,...,c_n\}$. Each crossing can be resolved in two different ways, the $0$-resolution and the $1$-resolutions, see Figure \ref{fig:Resol}.

\begin{figure}[ht]
\centering
\def\svgwidth{8cm}
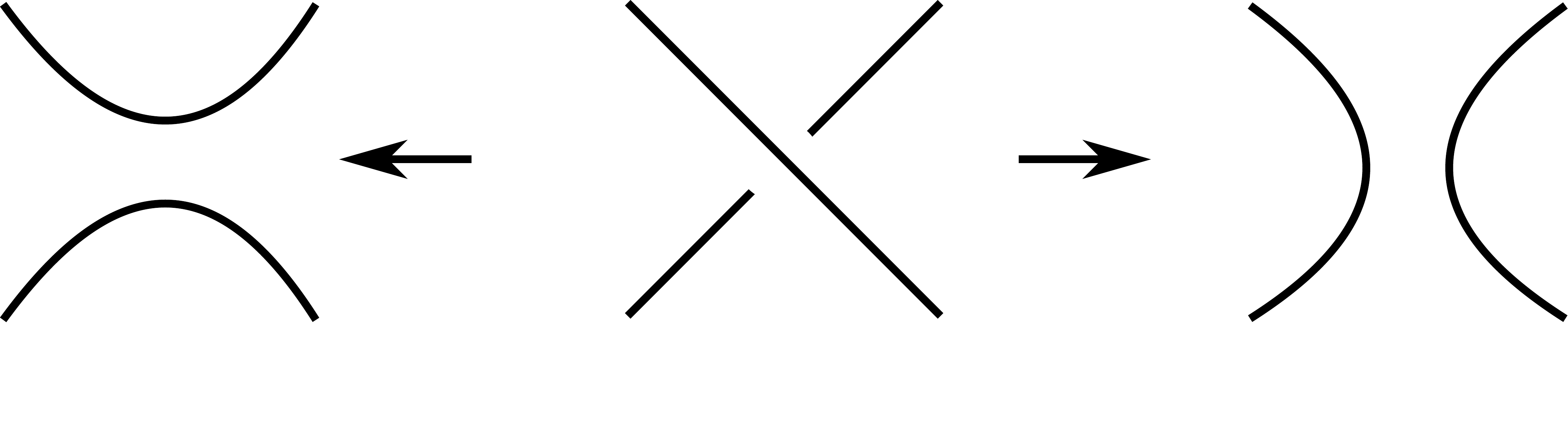
\caption{}\label{fig:Resol}
\end{figure}

For any vertex $v$ of $\{0,1\}^n$, let $K_v$ denote the complete resolution obtained by replacing the crossing $c_i$ by its $v_i$-resolution. Let $k_v$ denote the number of connected components of $K_v$.

There is a partial order on $\{0,1\}^n$ by setting $u\le v$ if $u_i\le v_i$ for all $1\le i\le n$. Denote $u\lessdot v$ if $u<v$ and $|v|-|u|=1$, where $|v|$ denotes $\sum_i v_i$. Corresponding to each edge of the cube, i.e. a pair $u\lessdot v$, there is an embedded cobordism in $\mathbb{R}^2\times [0,1]$ from $K_u$ to $K_v$, constructed by attaching an embedded one handle near the crossing $c_i$ where $u_i<v_i$.  If $k_u>k_v$, the cobordism \emph{merges} two circles, otherwise \emph{splits} two circles.

 Set $\Field=\Z/2\Z$. Let $\alg$ denotes the $2$-dimensional Frobenius algebra over $\Field[h]$ with basis $\{x_+,x_-\}$ and multiplication and  comultiplication defined as:
  \begin{minipage}[t]{0.4\textwidth}
  \begin{displaymath}
  \begin{split}
 &x_+\otimes x_+\xmapsto{m}  x_+\\
 &x_-\otimes x_+\xmapsto{m}  x_-\\
 &x_+\otimes x_-\xmapsto{m}  x_-\\
 &x_-\otimes x_-\xmapsto{m}  hx_-
\end{split}
  \end{displaymath}
 \end{minipage}
  \begin{minipage}[t]{0.6\textwidth}
  \begin{displaymath}
  \begin{split}
  &\\
&x_+\xmapsto{\Delta} x_+\otimes x_-+x_-\otimes x_++hx_+\otimes x_+\\
&x_-\xmapsto{\Delta} x_-\otimes x_-.
\end{split}
  \end{displaymath}
 \end{minipage}\\
 
 The Bar-Natan chain complex is obtained by applying the $(1+1)$-dimensional TQFT corresponding to $\alg$ to the above cube of cobordisms for $K$. More precisely, corresponding to a vertex $v\in\{0,1\}^n$, a Khovanov generator is a labeling of the circles in $K_v$ by $x_+$ or $x_-$. The module $\CBn(K_v)$ is defined as the free $\Field[h]$-module generated by the Khovanov generators corresponding to $v$ and 
 $$\CBn(K):=\bigoplus_{v\in \{0,1\}^n}\CBn(K_v).$$
 The differential $\dBn$ decomposes along the edges; for any $u\lessdot v$ the component
 $$\dBn^{u,v}:\CBn(K_u)\ra\CBn(K_v)$$ 
 is defined by the above multiplication if $K_v$ is obtained from $K_u$ by merge, otherwise, it is defined by comultiplication.  The Bar-Natan chain complex, $(\CBn(K),\dBn)$, was studied by Bar-Natan in~\cite{BarN}; the homology is denoted by $\HBn(K)$. For simplicity, we denote the differential by $\delta$.
 
The chain complex is bigraded; homological grading $\grh$ and an internal grading $\grq$ called \emph{quantum} grading. The homological grading for each summand $\CBn(K_v)$ of $\CBn(K)$ is given by $|v|-n_-$, where $n_\bullet$ denotes the number of $\bullet$-crossings in $K$ for $\bullet\in\{+,-\}$. The quantum grading for each Khovanov generator $x$ at a vertex $v$ is given by $$\grq(x)=n_+-2n_-+|v|+k_v^+-k_v^-$$
 where $k_v^{\bullet}$ denote the number of circles labelled by $x_{\bullet}$ in $K_v$, for $\bullet=+,-$. Furthermore, the formal variable $h$ has homological grading $0$ and quantum grading $-2$.\\
 %Combining homological grading and quantum grading, the \emph{delta} grading is defined as $\delta:=\grh-\frac{1}{2}\grq$. 
 
%%%%%%%%%%%%%%%%%%%%

 \subsection{Module structure on Bar-Natan homology and basepoint action} Let $K$ be a knot diagram and $p$ be a point on $K$ away from the crossings. The choice of $p$, induces a module structure on the Khovanov homology of $K$, described in~\cite{Kh-patterns}. Let us recall this structure for Bar-Natan homology. Choose a small unknot $U$ near $p$ and disjoint from $K$ such that merging the unknot with $K$ gives a knot or link diagram isotopic to $K$. Then, attaching the corresponding embedded one handle to $K\sqcup U$ gives an embedded cobordism in $\R^3\times I$ from $K\sqcup U$ to $K$ and its associated cobordism map, denoted by $m_p$, $$m_p:\CBn(K\sqcup U)=\CBn(K)\otimes _{\Field[h]}\alg\ra \CBn(K)$$
 is given by the multiplication map $m$ of $\alg$. More precisely, for a Khovanov generator $x\in\CBn(K_v)$, $m_p(x\otimes x_{\bullet})$ is the Khovanov generator obtained from $x$ by multiplying the label of the circle containing $p$ with $x_\bullet$.
 
 Similarly, let 
 $$\Delta_p:\CBn(K)\to\CBn(K)\otimes_{\Field[h]}\alg$$
 denote the cobordism map associated to the inverse cobordism from $K$ to $K\sqcup U$.
 
 If $K$ is related to another knot diagram $K'$ by a Reidemeister move away from the basepoint $p$, it is straightforward that the chain homotopy equivalence between $\CBn(K)$ and $\CBn(K')$, defined in~\cite{BarN}, commutes with $m_p$. On the other hand, any Reidemeister move which crosses $p$ is equivalent to a sequence of Reidemeister moves away from $p$. So it induces an $\alg$-module structure on the Bar-Natan homology of the underlying knot $K$. 
 
For a point $p$ on $K$, let
 $$\x_p:\CBn(K)\ra\CBn(K)$$
be the chain map $\x_p(a)=m_p(a\otimes x_-)$, defined as in~\cite{HN-Khovanov}. Therefore, for a Khovanov generator $x\in \CBn(K_v)$ if the circle containing $p$ is labeled by $x_+$ then $\x_p(x)$ is the Khovanov generator obtained from $x$ by changing the label of this circle to $x_-$, otherwise $\x_p(x)=hx$. Thus, $\x_p\circ\x_p=h\x_p$ and $\x_p$ reduces the quantum grading by $2$. 
  
 In contrast to Khovanov homology,  the chain homotopy type of the module multiplication map $\x_p$ is not indepent of the marked point $p$. In fact, ~\cite[Lemma 2.3]{HN-Khovanov} may be generalized to describe the difference of $\x_p$ and $\x_q$ when $p$ and $q$ lie on opposite sides of a crossing as follows. 
 
\begin{lem}\label{changep}
Let $p,q\in K$ be points away from the crossings, that lie on the opposite sides of a single crossing as in Figure \ref{fig:cross}. Then, $\x_p+\x_q$ is homotopy equivalent to multiplication by $h$. 
 \end{lem}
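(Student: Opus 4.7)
The plan is to construct an explicit chain homotopy $H\colon \CBn(K) \to \CBn(K)$ of homological degree $-1$ satisfying
\[ \delta H + H \delta \;=\; \x_p + \x_q + h\cdot\id. \]
The hypothesis that $p, q$ sit on opposite sides of the crossing $c$ (Figure~\ref{fig:cross}) ensures that at every vertex $v\in\{0,1\}^n$, changing the resolution at $c$ swaps the pair $p, q$ between the ``same global circle'' and ``distinct global circles'' configurations. Accordingly, the local edge map $\delta^c\colon\CBn(K_v)\to\CBn(K_{v^{+c}})$ is either a multiplication $m\colon C_1\otimes C_2\to C$ (merging, when $p,q$ lie on distinct circles $C_1\ni p, C_2\ni q$ at $v$ and on one circle $C$ at $v^{+c}$) or a comultiplication $\Delta\colon C\to C_1\otimes C_2$ (splitting, in the opposite situation).

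I would define $H$ to be zero on summands at vertices with $v_c=0$, and on summands at vertices $v$ with $v_c=1$ to be the local TQFT operation at $c$ in the reverse direction of the cube edge: $H=m$ where the incoming $\delta^c$ is $\Delta$, and $H=\Delta$ where it is $m$, extended by the identity on all circles not touched by the local saddle at $c$.

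Decomposing $\delta = \delta^c + \delta^{\hat c}$, where $\delta^{\hat c}$ sums the edge maps over crossings other than $c$, the cross terms $\delta^{\hat c}H + H\delta^{\hat c}$ vanish over $\Field=\Z/2$. For each face of the resolution cube involving $c$ and another crossing $c'$, this is the standard commutativity statement that disjoint saddles commute in the TQFT, the same fact that ensures $\delta^2=0$. Thus $(\delta H + H\delta)(v) = H\delta^c(v)$ when $v_c=0$, and $(\delta H + H\delta)(v) = \delta^c H(v)$ when $v_c=1$, with both expressions landing in the summand at $v$.

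These expressions reduce to two algebraic identities in the Bar-Natan Frobenius algebra $\alg$, verified by direct calculation from the (co)multiplication formulas in the excerpt:
\[ m\circ\Delta \;=\; h\cdot\id_{\alg}, \qquad \Delta\circ m \;=\; L\otimes\id + \id\otimes L + h\cdot\id_{\alg\otimes\alg}, \]
where $L\colon\alg\to\alg$ is multiplication by $x_-$ (so $L(x_+)=x_-$ and $L(x_-)=hx_-$). At a vertex where $p, q$ lie on a single circle, both $\x_p$ and $\x_q$ act as $L$ on that circle, so $\x_p+\x_q = 0$ over $\Field$, and the first identity specialises $m\circ\Delta$ to $h\cdot\id = (\x_p+\x_q)+h\cdot\id$. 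At a vertex where $p, q$ lie on distinct circles $C_1\ni p$ and $C_2\ni q$, the operators $L\otimes\id$ and $\id\otimes L$ on $C_1\otimes C_2$ coincide with $\x_p$ and $\x_q$, and the second identity yields $\Delta\circ m = \x_p+\x_q+h\cdot\id$. The main subtlety is the cube-commutativity step; the remainder is a short direct calculation in $\alg$.
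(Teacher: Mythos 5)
Your proof is correct and takes essentially the same route as the paper: your homotopy (the reverse local TQFT operation at $c$ on the $v_c=1$ summands, zero on the $v_c=0$ summands) is exactly the paper's $H(a_0,a_1)=(\bar{\fmap}(a_1),0)$ in its mapping-cone decomposition over $c$, with the cross-term cancellation being the statement that $\bar{\fmap}$ is a chain map. Your two operator identities $m\circ\Delta=h\cdot\id$ and $\Delta\circ m=L\otimes\id+\id\otimes L+h\cdot\id$, together with the same-circle/distinct-circles case split, are precisely the content of the paper's Lemma~\ref{Comp-Cob-vs-Inv} showing $\bar{\fmap}\fmap$ and $\fmap\bar{\fmap}$ equal $h+\x_p+\x_q$.
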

 
 \begin{figure}[ht]
\centering
\def\svgwidth{2cm}
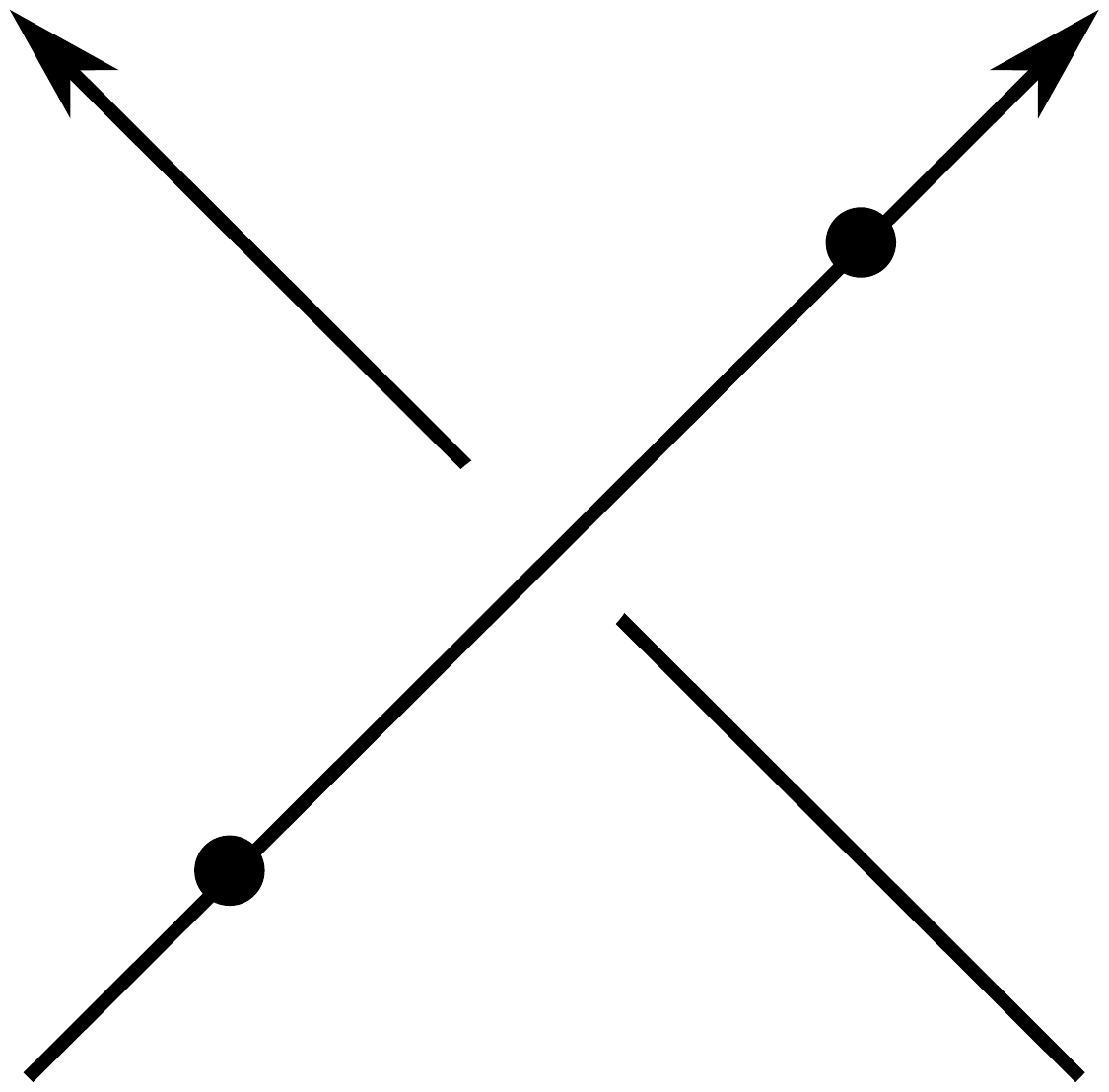
\caption{}\label{fig:cross}
\end{figure}   
 
 First we need to prove another lemma. Assume $K$ and $L$ be oriented link diagrams such that $L$ is obtained from $K$ by an oriented \emph{saddle} move as in Figure~\ref{fig:SaddleM}. This saddle move represents an oriented, embedded saddle cobordism in $\R^3\times [0,1]$ from $K$ to $L$. Let 
 $$\fmap:\CBn(K)\ra\CBn(L)\ \ \ \ \text{and}\ \ \ \ \bar{\fmap}:\CBn(L)\ra\CBn(K)$$ denote the chain maps on the Bar-Natan chain complex associated with this cobordism and its inverse, respectively. 
 
 \begin{figure}[ht]
\centering
\def\svgwidth{5.5cm}
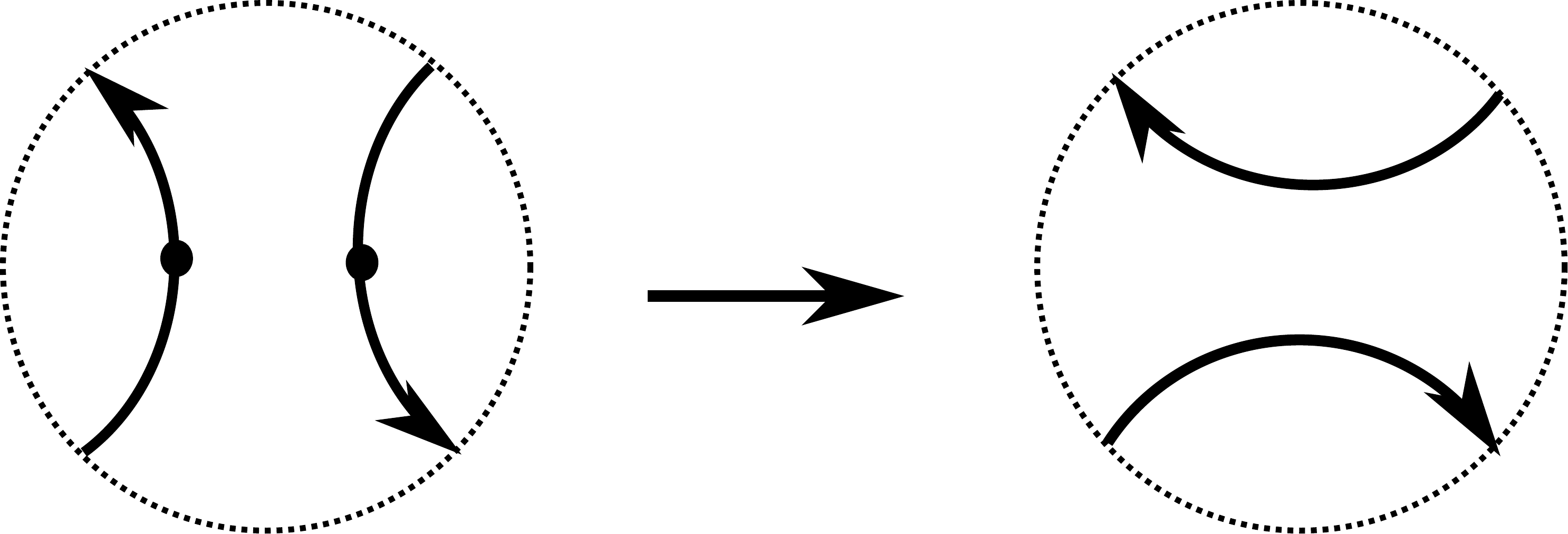
\caption{Saddle move: $q$ and $q'$ are the attaching points of the corresponding one-handle.}\label{fig:SaddleM}
\end{figure}  

%\todo[inline]{change labeling of points}

\begin{lem}\label{Comp-Cob-vs-Inv} 
With the above notation fixed, for any $a\in\CBn(K)$ we have 
\begin{equation}\label{U1-handle}
\bar{\fmap}\circ\fmap(a)=ha+\x_q(a)+\x_{q'}(a).
\end{equation}
\end{lem}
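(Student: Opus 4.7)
The plan is to reduce the lemma to a resolution-wise, purely algebraic verification in the Frobenius algebra $\alg$. Since the saddle move occurs in a small ball disjoint from the crossings of $K$ (and correspondingly of $L$), the saddle 1-handle cobordism restricts to a single 1-handle attachment on each fully resolved diagram. Hence both $\fmap$ and $\bar{\fmap}$ preserve the decomposition $\CBn(K)=\bigoplus_{v\in\{0,1\}^n}\CBn(K_v)$, and on each summand $\CBn(K_v)$ they act via the TQFT operation associated with a single 1-handle: multiplication $m$ if $q$ and $q'$ lie on different circles of $K_v$, and comultiplication $\Delta$ otherwise. Since the operators $\x_q$, $\x_{q'}$, and multiplication by $h$ also respect the resolution decomposition, it suffices to check the identity \eqref{U1-handle} on each $\CBn(K_v)$ separately.

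First I will handle the case in which $q$ and $q'$ lie on the same circle in $K_v$. Here $\fmap|_v$ splits that circle into two and $\bar{\fmap}|_v$ merges them back, so the composition acts on the relevant $\alg$ factor as $m\circ\Delta$. A direct computation using the given multiplication and comultiplication in characteristic two gives $m\circ\Delta(x_+)=2x_-+hx_+=hx_+$ and $m\circ\Delta(x_-)=hx_-$, so $m\circ\Delta=h\cdot\id$. On the other hand, since $q$ and $q'$ lie on the same circle, $\x_q$ and $\x_{q'}$ both act on that circle by multiplying its label by $x_-$, so $\x_q(a)+\x_{q'}(a)=0$ in characteristic two. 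Both sides therefore equal $ha$.

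Next I will treat the case in which $q$ and $q'$ lie on different circles in $K_v$. Now $\fmap|_v$ merges the two circles and $\bar{\fmap}|_v$ splits them back, so the composition acts on the corresponding $\alg\otimes\alg$ factor as $\Delta\circ m$. A case-by-case check on the four basis elements $x_\epsilon\otimes x_{\epsilon'}$ with $\epsilon,\epsilon'\in\{+,-\}$ shows
\[
\Delta\circ m(x_\epsilon\otimes x_{\epsilon'})=h(x_\epsilon\otimes x_{\epsilon'})+(x_\epsilon x_-)\otimes x_{\epsilon'}+x_\epsilon\otimes(x_{\epsilon'}x_-),
\]
which is precisely $h\cdot\id+\x_q+\x_{q'}$ restricted to this resolution. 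For example, in the $(+,+)$ case the expression $x_+\otimes x_-+x_-\otimes x_++hx_+\otimes x_+$ matches both sides directly, while in the $(+,-)$ and $(-,-)$ cases the identity relies on $2\equiv 0$ in $\Field$ to absorb the extra $h(x_\epsilon\otimes x_{\epsilon'})$ terms.

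The only real bookkeeping is the case-by-case verification in the different-circle case; the use of characteristic two is essential in the $(+,-)$, $(-,+)$, and $(-,-)$ entries but is not an obstruction since we are working over $\Field=\Z/2\Z$ throughout. Combining the two cases and summing over all resolutions $v$ yields the desired identity for every $a\in\CBn(K)$.
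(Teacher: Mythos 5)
Your proof is correct and takes essentially the same approach as the paper's: reduce to the resolution-wise TQFT computation, treat the same-circle case via $m\circ\Delta = h\cdot\id$ (noting $\x_q + \x_{q'} = 0$ there), and the different-circle case via the four-entry check of $\Delta\circ m$ against $h\cdot\id + \x_q + \x_{q'}$.
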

\begin{proof} Given a vertex $v\in \{0,1\}^n$, if $q$ and $q'$ lie on the same connected component of the complete resolution $K_v$, then $\x_{q}|_{\CBn(K_v)}=\x_{q'}|_{\CBn(K_v)}$. Thus, for any $a\in\CBn(K_v)$ the Equality (\ref{U1-handle}) follows from
$$m\Delta(x_+)=hx_+,\ \ \ \ \text{and}\ \ \ \ m\Delta(x_-)=hx_-.$$ 

Otherwise, if $q$ and $q'$ belong to distinct connected components of $K_v$, for any Khovanov generator $x\in\CBn(K_v)$ the statement follows from one of the following relations, depending on the labels of the circles containing $q$ and $q'$.
\begin{displaymath}
\begin{split}
&\Delta m(x_+\otimes x_+)=x_+\otimes x_-+x_-\otimes x_++hx_+\otimes x_+\\
 &\Delta m(x_-\otimes x_+)=x_-\otimes x_-=x_-\otimes x_-+m(x_-\otimes x_-)\otimes x_++hx_-\otimes x_+\\
&\Delta m(x_+\otimes x_-)=x_-\otimes x_-=x_-\otimes x_-+x_+\otimes m(x_-\otimes x_-)+hx_+\otimes x_-\\
&\Delta m(x_-\otimes x_-)=hx_-\otimes x_-=hx_-\otimes x_-+m(x_-\otimes x_-)\otimes x_-+x_-\otimes m(x_-\otimes x_-)
\end{split}
\end{displaymath}
\end{proof}

\begin{proof}(Lemma~\ref{changep})
  The proof is similar to the proof of  ~\cite[Lemma 2.3]{HN-Khovanov}. Denote the crossing between $p$ and $q$ by $c$. Let $K_\bullet$ be the diagram obtained from $K$ after applying the $\bullet$-resolution at $c$. One may orient $K_0$ and $K_1$ such that $K_1$ is obtained from $K_0$ by an oriented saddle move, and up to appropriate grading shifts, $\CBn(K)$ is given by the mapping cone 
$$\fmap:\CBn(K_0)\ra\CBn(K_1)$$
where $\fmap$ is the corresponding cobordism map. Let $\bar{\fmap}$ denote the cobordism map associated with the inverse cobordism from $K_1$ to $K_0$. Under this decomposition we define 
$$H(a_0,a_1):=(\bar{\fmap}(a_1),0).$$
 Then, 
\begin{displaymath}
\begin{split}
\delta H(a_0,a_1)+H\delta (a_0,a_1)&=\delta (\bar{\fmap}(a_1),0)+H(\delta a_0,\fmap(a_0)+\delta a_1)\\
&=(\delta\bar{\fmap}(a_1),\fmap\bar{\fmap}(a_1))+(\bar{\fmap}(\fmap(a_0)+\delta a_1),0)\\
&=(\bar{\fmap}\fmap(a_0),\fmap\bar{\fmap}(a_1)),
\end{split}
\end{displaymath}
and thus it follows from lemma \ref{Comp-Cob-vs-Inv} that $H$ is a chain homotopy between $\x_p+\x_q$ and multiplication by $h$.
 \end{proof} 
 
 \begin{cor} \label{1-handle}
 Assume $K$ and $L$ are oriented link diagrams so that $L$ is obtained from $K$ by an oriented saddle move. If the attaching points of the saddle lie on the same connected component of $K$, then $\bar{\fmap}\circ\fmap$ is chain homotopic to multiplication by $h$. As before, $\fmap$ and $\bar{\fmap}$ denote the chain maps associated with the corresponding saddle cobordism and its inverse, respectively. 
 \end{cor}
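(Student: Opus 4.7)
My plan is to combine Lemma~\ref{Comp-Cob-vs-Inv} with an isotopy argument to reduce the corollary to a trivial local computation.

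First, I would apply Lemma~\ref{Comp-Cob-vs-Inv} to rewrite
\[
\bar{\fmap}\circ\fmap(a)=ha+\x_q(a)+\x_{q'}(a).
\]
Thus the corollary reduces to showing that $\x_q+\x_{q'}$ is chain-homotopic to zero whenever $q$ and $q'$ lie on the same connected component of $K$.

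The key observation concerns the special case in which $q$ and $q'$ can be joined by a sub-arc of $K$ meeting no crossing---i.e., when they lie on a common local arc of the diagram. In that case, the connecting sub-arc is preserved under every resolution, so $q$ and $q'$ lie on the same circle of each complete resolution $K_v$. Consequently $\x_q$ and $\x_{q'}$ agree as chain maps on every summand $\CBn(K_v)$, giving $\x_q+\x_{q'}\equiv 0$ identically, and the desired conclusion follows.

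To reduce the general case to this special case, I would invoke the functoriality of the Bar-Natan cobordism maps under isotopy. Since $q$ and $q'$ lie on the same component of $K$, their positions can be slid along that component by an ambient isotopy of $(\R^3,K)$ until they lie adjacent on a common local arc; at the diagram level, this is realized by a finite sequence of Reidemeister moves. Pushing the saddle cobordism along this isotopy produces a new saddle-inverse saddle pair whose composition, by the preceding paragraph, is chain-homotopic to multiplication by $h$. Bar-Natan's invariance theorem for chain-level cobordism maps then implies that the original composition $\bar{\fmap}\circ\fmap$ is chain-homotopic to the isotoped one, and hence to multiplication by $h$.

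The hardest step will be the precise invocation of functoriality: one must verify that sliding the attaching points along the knot component amounts to an allowed ambient isotopy of the saddle cobordism in $\R^3\times[0,1]$, and that the accompanying Reidemeister chain homotopy equivalences on $\CBn(K)$ and $\CBn(L)$ intertwine (up to chain homotopy) with the saddle and inverse-saddle maps. This compatibility is a routine consequence of the movie-move invariance of the Bar-Natan TQFT, but must be applied carefully to make the reduction to the special case rigorous.
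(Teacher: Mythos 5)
Your proof is correct, but it takes a genuinely different route from the one in the paper. Both arguments begin the same way, by applying Lemma~\ref{Comp-Cob-vs-Inv} to reduce the corollary to showing $\x_q+\x_{q'}\simeq 0$. The paper then quotes Lemma~\ref{changep} (the single-crossing case) together with the observation that any arc $\alpha\subset K$ joining the attaching points of an oriented saddle passes through an even number of crossings, so applying Lemma~\ref{changep} once per crossing yields $\x_q+\x_{q'}\simeq(\text{even})\cdot h\equiv 0$ over $\Field[h]$. You instead bypass Lemma~\ref{changep} entirely: you observe that $\x_q=\x_{q'}$ on the nose when $q,q'$ bound a crossingless sub-arc, and then reduce the general case to this one by isotoping the attaching band in $\R^3\times[0,1]$ and invoking the functoriality of the Bar-Natan TQFT under link cobordism (movie-move invariance). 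Both approaches work, but the costs differ. The paper's argument is short and self-contained modulo an asserted but unproved parity claim (true, by a mod-2 Jordan curve count applied to $\alpha$ closed up through the saddle region against the remainder of the component). Your argument avoids that claim but leans on the substantially heavier functoriality theorem, which the rest of the paper does not otherwise require. One small imprecision worth flagging: sliding $q$ along the component to meet $q'$ is an isotopy of the attaching band inside $\R^3\times[0,1]$, not an ambient isotopy of $(\R^3,K)$ realized by Reidemeister moves---the diagram of $K$ never changes, only the attaching locus of the band. What you actually need (and what holds) is that the two doubled cobordisms $K\to K$ obtained from the original and the slid band are isotopic rel boundary, since each is $K\times[0,1]$ with a trivially embedded tube attached, and such a tube can be slid freely along $K\times[0,1]$.
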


 \begin{proof}
Let $p$ and $q$ be the attaching points of the saddle. Consider an arc $\alpha\subset K$ connecting $p$ to $q$. Moving the point $p$ along $\alpha$, it would cross an even number of crossings till it gets to $q$; thus Lemma \ref{changep} implies that $\x_p$ is homotopy equivalent to $\x_q$. Then, by Lemma \ref{Comp-Cob-vs-Inv} we have $\bar{\fmap}\fmap$ is homotopy equivalent to multiplication by $h$.

\end{proof}

\subsection{Connected sum with a Hopf link}\label{sec:sumHopf}
%\todo{Check this section!}
A connected sum formula for Khovanov homology has been studied in \cite{Kh-Intro}.  In this section, we recall a special case of this formula, taking a connected sum with Hopf link, for Bar-Natan homology. 
 
% \todo[inline]{Find Reference for connected sum.}
 
Let $H$ be the right-handed Hopf link. Choose an arbitrary point $p\in K$. We obtain an oriented diagram $L$ for the link $K\# H$ by changing $K$ locally in a neighborhood of $p$, as in Figure \ref{fig:Hopf}.

\begin{figure}[ht]
\centering
\def\svgwidth{5.5cm}
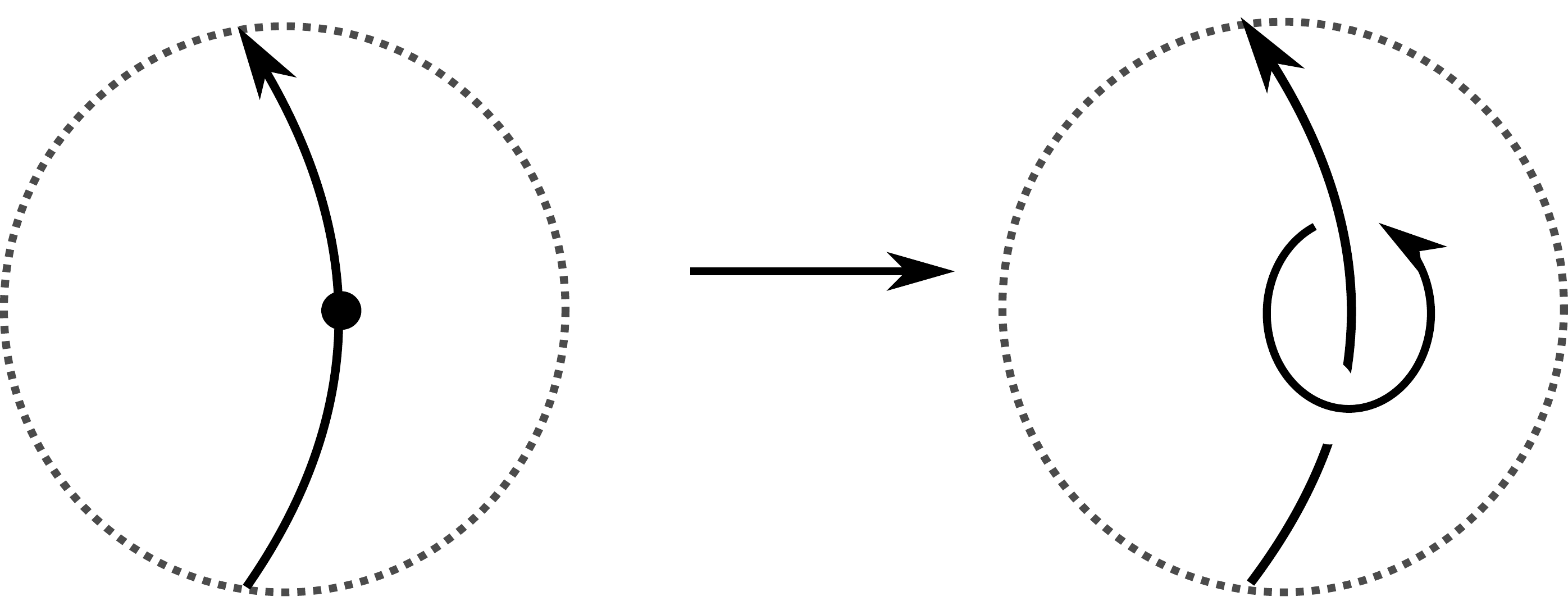
\caption{Connected sum with right-handed Hopf link at $p$}\label{fig:Hopf}
\end{figure}

Denote the new crossings by $c$ and $c'$. For $i,j\in\{0,1\}$, let $L_{ij}$ denote the diagram obtained from $L$ by applying $i$- and $j$-resolutions at the crossings $c$ and $c'$, respectively. We orient these diagrams such that their orientations coincide with the orientation of $K$ outside the above neighborhood of $p$. The oriented diagrams $L_{01}$ and $L_{10}$ are isotopic to $K$, while $L_{00}$ and $L_{11}$ are isotopic to a disjoint union of $K$ with an unknot near the point $p$. 
The chain complex $\CBn(L)$ is given by the mapping cone: 
\begin{displaymath}
\begin{diagram}
\CBn(L_{10})=\CBn(K)&&\rTo{\Delta_p}&&\CBn(L_{11})=\CBn(K)\otimes_{\Field[h]}\alg\\
\uTo{m_p}&&&&\uTo{\Delta_p}\\
\CBn(L_{00})=\CBn(K)\otimes_{\Field[h]}\alg&&\rTo{m_p}&&\CBn(L_{01})=\CBn(K)
\end{diagram}
\end{displaymath}

From now on, we use this decomposition to show any  $a\in\CBn(L)$ as $a=(a_{00},a_{01},a_{10},a_{11})$ where $a_{ij}\in\CBn(L_{ij})$ and so $a_{00},a_{11}\in\CBn(K)\otimes \alg$ while $a_{10},a_{01}\in\CBn(K)$.

 We define chain maps $i:\CBn(K)\to\CBn(L)$ and $p:\CBn(L)\to\CBn(K)$ as 
\begin{equation}\label{def:1stmaps}
i(a)=(0,0,0,a\otimes x_+)\ \ \ \ \text{and}\ \ \ \ p(a_{00},a_{01},a_{10},a_{11})=a_{00}^-
\end{equation}
where $a_{00}=a_{00}^+\otimes x_++a_{00}^-\otimes x_-$. It is straightforward that both $i$ and $p$ are chain maps. 

 \begin{lem}\label{lem:exactHopf}
The sequence 
\[
0\rightarrow \HBn(K)\xrightarrow{i_\star}\HBn(L)\xrightarrow{p_\star}\HBn(K)\rightarrow 0
\]
is a split exact sequence.
\end{lem}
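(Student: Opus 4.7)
The plan proceeds in three steps. First, $i$ and $p$ are chain maps by inspection, and $p\circ i = 0$ on the nose since $i(a) = (0,0,0,a\otimes x_+)$ lives entirely in the $L_{11}$-summand while $p$ reads only the $x_-$-coefficient of the $L_{00}$-summand.

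Second, I construct a chain-level section of $p$. The naive candidate $s_0(a) = (a\otimes x_-,0,0,0)$ satisfies $p\circ s_0 = \id$ but fails to be a chain map: computing $\delta s_0(a)$ via the cube differential and using $m_p(b\otimes x_-) = \x_p(b)$ gives the error
\[
\delta s_0(a) - s_0(\delta_K a) \;=\; (0,\; \x_p(a),\; \x_p(a),\; 0).
\]
Since $\x_p$ is itself a chain map and $m_p(b\otimes x_+) = b$, the corrected map
\[
s(a) \;:=\; \bigl(a\otimes x_- + \x_p(a)\otimes x_+,\; 0,\; 0,\; 0\bigr)
\]
is a bona fide chain map with $p\circ s = \id_{\CBn(K)}$. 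Consequently $\CBn(L) = \ker p \,\oplus\, s(\CBn(K))$ as chain complexes, and $s_\star$ provides a splitting of $p_\star$ on homology.

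Third, I identify $H(\ker p) \cong \HBn(K)$ via $i_\star$ by applying Bar-Natan's cancellation lemma to the chain isomorphism $m_p\colon \CBn(K)\cdot x_+ \xrightarrow{\sim} \CBn(L_{01})$ inside the cube for $\CBn(L)$. The induced correction on the parallel edge $\CBn(K)\cdot x_- \to \CBn(L_{10})$ equals $\x_p$, which cancels the original $\x_p$-edge over $\Field = \Z/2\Z$, so $\CBn(L)$ becomes chain homotopy equivalent to
\[
\bigl(\CBn(K)\cdot x_-\bigr) \;\oplus\; \mathrm{Cone}\bigl(\Delta_p\colon \CBn(L_{10}) \to \CBn(L_{11})\bigr),
\]
with $s$ transporting to the inclusion of the first summand and $i$ to the inclusion $a\mapsto (0, a\otimes x_+)$ into the second. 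Applying the comultiplication $\Delta(x_+) = x_+\otimes x_- + x_-\otimes x_+ + h\,x_+\otimes x_+$ to the circle through $p$, together with $\Delta(x_-) = x_-\otimes x_-$ and the identity $a^+-a^- = a^++a^-$ over $\Field$, yields the closed form
\[
\Delta_p(a) \;=\; a\otimes x_- + (\x_p + h)(a)\otimes x_+.
\]
From this one reads immediately that $\Delta_{p,\star}$ is injective on $\HBn(K)$ and that $\CoKer(\Delta_{p,\star})$ is freely generated by classes of the form $[a]\otimes x_+$, whence $\CoKer(\Delta_{p,\star}) \cong \HBn(K)$. The long exact sequence of the mapping cone then gives $H(\mathrm{Cone}(\Delta_p)) \cong \HBn(K)$, and this isomorphism is realized precisely by $i_\star$. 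The main technical point I expect is to track carefully how $s$ and $i$ transport across the chain homotopy equivalence produced by the cancellation, so that $\im s_\star$ and $\im i_\star$ emerge as genuinely complementary summands of $\HBn(L)$; once this is done, the sequence is both short exact and split.
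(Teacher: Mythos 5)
Your proof is correct, and the section you arrive at by correcting the naive $s_0$ is literally the paper's map $s(a)=(\x_p(a)\otimes x_+ + a\otimes x_-,0,0,0)$ with $p\circ s=\id$. Where you genuinely diverge is in proving exactness at the middle term and injectivity of $i_\star$. The paper does this by hand: given a cycle representing a class in $\ker(p_\star)$, it adds the explicit boundaries $\delta(a_{01}\otimes x_+,0,0,0)$ and $\delta(0,0,a_{11}^-,0)$ to push the cycle into $\im(i)$, and separately exhibits a chain-level retraction $r(a)=a_{11}^++\x_p(a_{11}^-)+ha_{11}^-$ with $r\circ i=\id$. You instead Gaussian-eliminate the isomorphism $m_p\colon \CBn(K)\cdot x_+\to\CBn(L_{01})$; the correction on the parallel edge is indeed $\x_p$ and cancels the original edge over $\Field$, so $\ker p\simeq\mathrm{Cone}(\Delta_p)$, and the closed form $\Delta_p(a)=a\otimes x_-+(\x_p+h)(a)\otimes x_+$ (the same identity the paper uses inside its proof) gives injectivity of $\Delta_{p\star}$ and identifies $H(\ker p)$ with $\HBn(K)$ via $i_\star$. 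Your route is more structural and explains where the paper's retraction comes from: $r$ is exactly the cokernel identification $b^+\otimes x_++b^-\otimes x_-\mapsto b^++\x_p(b^-)+hb^-$ that falls out of your computation. The cost is having to transport $s$ and $i$ through the homotopy equivalence, which you rightly flag; it works because the cancellation's inclusion of the surviving degree-zero summand is precisely $s$, while $i$ lands in the untouched $L_{11}$ vertex. One cosmetic point: $\CoKer(\Delta_{p\star})$ is isomorphic to $\HBn(K)$ as an $\Field[h]$-module but need not be free, so ``freely generated'' should just read ``generated.''
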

 
\begin{proof} 
First, we prove that $\im(i_\star)=\ker(p_\star)$. Consider a homology class $\alpha$ in $\ker(p_\star)$. Any such class can be represented by a cycle $a$ such that $a_{00}^-=0$. Since, $\delta(a)=0$, $m_p(a_{00})+\delta(a_{01})=0$ and thus
\[a+\delta(a_{01}\otimes x_+,0,0,0)=(0,0,a_{01}+a_{10},a_{11}).\]
Again, follows from $\delta a=0$ that 
\[\delta a_{11}=\Delta_p(a_{10}+a_{01})=(a_{10}+a_{01})\otimes x_-+(\x_p(a_{10}+a_{01})+h(a_{10}+a_{01}))\otimes x_+.\] 
Therefore, $\delta a_{11}^-=a_{10}+a_{01}$ where $a_{11}=a_{11}^+\otimes x_++a_{11}^-\otimes x_-$ and so 
\[(0,0,a_{10}+a_{01},a_{11})+\delta(0,0,a_{11}^-,0)=(0,0,0,a_{11}+\Delta_p a_{11}^-)=i(a_{11}^++\x_p(a_{11}^-)+ha_{11}^-).\]

Then, let $r:\CBn(L)\to\CBn(K)$ and $s:\CBn(K)\to\CBn(L)$ be the chain maps defined as 
\[r(a_{00},a_{01},a_{10},a_{11})=a_{11}^++\x_p(a_{11}^-)+ha_{11}^-\ \ \ \ \text{and}\ \ \ \ s(a)=(\x_p(a)\otimes x_++a\otimes x_-,0,0,0)\]
where $a_{11}=a_{11}^+\otimes x_++a_{11}^-\otimes x_-$. It is straightforward that $r$ and $s$ are chain maps such that $r\circ i=\mathrm{id}$ and $p\circ s=\mathrm{id}$. So $i_\star$ and $p_\star$ are injective and surjective, respectively, and the sequence splits.

%
%First, we show that $i_{\star}$ is injective. Suppose that for a homology class $\alpha=[a]$ in $\HBn(K)$ we have $i_{\star}(\alpha)=0$. Then, $i(a)=\delta b$ for a $b=(b_{00},b_{01},b_{10},b_{11})$. Therefore, 
%\[
%\begin{split}
%a\otimes x_+&=\delta b_{11}+\Delta_p(b_{01}+b_{10})\\
%&=\delta b_{11}+(b_{01}+b_{10})\otimes x_-+(\x_p(b_{01}+b_{10})+h(b_{01}+b_{10}))\otimes x_+\\
%&=(\delta b_{11}^++\x_p(b_{01}+b_{10})+h(b_{01}+b_{10}))\otimes x_++(\delta b_{11}^-+(b_{01}+b_{10}))\otimes x_-
%\end{split}
%\]
%where $b_{11}=b_{11}^+\otimes x_++b_{11}^-\otimes x_-$. Hence, $b_{10}+b_{01}=\delta b_{11}^-$ and 
%\[a=\delta b_{11}^++\x_p(b_{01}+b_{10})+h(b_{01}+b_{10})=\delta(b_{11}^++\x_p(b_{11}^-)+hb_{11}^-),\]
%which implies that $\alpha=0$.
%
%
%
%Finally, let $j:\CBn(K)\to\CBn(L)$ be the chain map $j(a)=(\x_p(a)\otimes x_++a\otimes x_-,0,0,0)$. The composition $p\circ j=\mathrm{id}$ and so $p_\star$ is surjective and we have a split exact sequence.
\end{proof}

%\[\imath(a)=(\x_p(a)\otimes x_++a\otimes x_-,0,0,a\otimes x_+)\]
%and 
%\[p(a_{00},a_{01},a_{10},a_{11})=a_{00}^-+a_{11}^++\x_p(a_{11}^-)+ha_{11}^-\]
%where $a_{ii}=a_{ii}^+\otimes x_++a_{ii}^-\otimes x_-$ for $i=0,1$. It is not hard to see that both $\imath$ and $p$ are chain maps. 

The homomorphism $p_\star$ preserves homological grading and decreases the quantum grading by $1$, while $i_\star$ increases homological grading by $2$ and quantum grading by $5$. 

Similarly, we may define chain maps $i$ and $p$ for the connected sum of $K$ with the left-handed trefoil, $K\#mH$, so that the induced homomorphism on homology gives a split exact sequence. The only difference is that $p_\star$ increases the homological grading by $2$ and quantum grading by $5$, while $i_\star$ preserves the homological grading and decreases the quantum grading by $1$. 

Finally, we define the chain maps $\mathsf{i}:\CBn(K)\to\CBn(L)$ and $\mathsf{p}:\CBn(L)\to\CBn(K)$ as
\begin{equation}\label{def:Hopfmaps}
\mathsf{i}=i+s\ \ \ \ \text{and}\ \ \ \ \mathsf{p}=p+r
\end{equation}
where the chain maps $i$ and $p$ are defined in Equation~(\ref{def:1stmaps}) and $r$ and $s$ are defined in the proof of Lemma~\ref{lem:exactHopf}. Note that for $L=K\#mH$, one may define similar chain maps, abusing the notation, we denote these maps by $\mathsf{i}$ and $\mathsf{p}$, too.

\section{Lower bound for unknotting number}\label{unknotting-bound}
The goal of this section is to prove Theorem \ref{thm:lbunknotting}.

Let $C$ be a chain complex of $\Field[h]$-modules. Recall that a homology class $\alpha\in\Ht_\star(C)$ is called \emph{torsion} if $h^n\alpha=0$ for some positive $n$, and the smallest such $n$ is called the \emph{order} of $\alpha$, denoted by $\ord(\alpha)$. Let $T(C)$ be the set of torsion homology classes in $\Ht_\star(C)$ and define
\[\wu(C):=\max_{\alpha\in T(C)}\ord(\alpha).\]
\begin{lem}\label{lem:Algver}
  Given chain complexes $C$ and $C'$ of $\Field[h]$-modules, together with chain maps
  \[f:C\to C'\ \ \ \ \text{and}\ \ \ \ g:C'\to C\]
  so that both $f_\star \circ g_{\star}$ and $g_\star\circ f_\star$ are equal to multiplication by $h^n$ for some $n>0$, then
  \[|\wu(C)-\wu(C')|\le n.\]

\end{lem}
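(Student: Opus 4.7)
The plan is to show each inequality $\wu(C') \ge \wu(C) - n$ and $\wu(C) \ge \wu(C') - n$ separately; by symmetry of the hypothesis in $(C,f)$ and $(C',g)$, it suffices to establish one. The basic idea is that under any chain map, the image of a torsion class is torsion of no larger order, so $f_\star$ and $g_\star$ each induce maps $T(C) \to T(C')$ and $T(C') \to T(C)$; the assumption $g_\star \circ f_\star = h^n$ then forces these maps to nearly preserve order, up to the loss of at most $n$.

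More concretely, I would fix a torsion class $\alpha \in T(C)$ of order $k := \ord(\alpha)$, and consider the class $\beta := f_\star(\alpha) \in H_\star(C')$. Because $f_\star$ is $\Field[h]$-linear, $h^k \beta = f_\star(h^k \alpha) = 0$, so $\beta$ is torsion with $\ord(\beta) \le k$. Set $m := \ord(\beta)$, so $h^m \beta = 0$. Applying $g_\star$ and using $g_\star \circ f_\star = h^n$ gives
\[
0 = g_\star(h^m \beta) = h^m\, g_\star f_\star(\alpha) = h^{m+n} \alpha,
\]
hence $k = \ord(\alpha) \le m + n$, i.e.\ $\ord(f_\star(\alpha)) \ge k - n$. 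Taking the maximum over $\alpha \in T(C)$ yields $\wu(C') \ge \wu(C) - n$ (the case $\wu(C) \le n$ is trivial since orders are non-negative).

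Swapping the roles of $C$ and $C'$, and using the hypothesis that $f_\star \circ g_\star$ is also multiplication by $h^n$, the identical argument gives $\wu(C) \ge \wu(C') - n$. Combining these two inequalities gives $|\wu(C) - \wu(C')| \le n$, as desired. The only subtle point to state carefully is the convention $\ord(0) = 0$, so that the bound remains meaningful when $f_\star(\alpha) = 0$; but this causes no difficulty, since the inequality $h^{m+n}\alpha = 0 \Rightarrow \ord(\alpha) \le m+n$ holds in that convention. There is no substantial obstacle: the lemma is purely formal and the proof reduces to chasing the identity $g_\star f_\star = h^n$ through the definition of order.
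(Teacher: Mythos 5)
Your proof is correct and follows essentially the same route as the paper: both arguments apply $g_\star$ to $f_\star(\alpha)$, use $g_\star f_\star = h^n$ to conclude $\ord(\alpha) \le \ord(f_\star(\alpha)) + n$, and then take maxima and invoke symmetry for the reverse inequality. Your remark about the convention for $\ord(0)$ is a reasonable point of care that the paper glosses over, but it does not affect the argument either way.
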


\begin{proof}
  For any homology class $\alpha\in T(C)$ we have $f_\star(\alpha)\in T(C')$, and it follows from $g_\star\circ f_\star(\alpha)=h^n\alpha$ that
\[\ord(h^n\alpha)\le\ord(f_\star(\alpha))\le \ord(\alpha).\]
Thus, $\ord(\alpha)\le\ord(f_\star(\alpha))+n$, and so $\wu(C)\le \wu(C')+n$. Similarly, $\wu(C')\le \wu(C)+n$ which proves the result.
  
\end{proof}

Suppose $K_+$ and $K_-$ be oriented knot diagrams so that $K_-$ is obtained from $K_+$ by changing one  positive crossing, denoted by $c$, to a negative crossing. For $i=0,1$, denote the $i$-resolution of $K_+$  at the crossing $c$ by $K_i$. We orient $K_0$ and $K_1$ such that they are related by an oriented saddle move. Let $\fmap$ and $\bar{\fmap}$ denote the cobordism maps corresponding to the saddle cobordism from $K_0$ to $K_1$ and its inverse from $K_1$ to $K_0$, respectively. The Bar-Natan chain complexes $\CBn(K_+)$ and $\CBn(K_-)$, upto grading shifts,  are given by the mapping cones of $\fmap$ and $\bar{\fmap}$ respectively.

%After orienting $K_0$ and $K_1$ such that they are related by an oriented saddle move, upto a bigrading shift, the chain complexes $\CBn(K_+)$ and 

%The $\Field_2[h]$-module $\CBn(K)$ decomposed as the direct sum $$\CBn(K)=\CBn(K)$$

%Then, $\Field_2[h]$-modules $\CBn(K_+)$ and $\CBn(K_-)$ as the direct sum $\CBn(K_0)\oplus\CBn(K_1)$. Considering this decomposition, the boundary maps on $\CBn(K_+)$ and $\CBn(K_-)$ are given as
%\[
%\delta_+=\begin{bmatrix} \delta_0&0\\ \fmap&\delta_1\end{bmatrix}\ \ \ \ \text{and}\ \ \ \ \delta_-=\begin{bmatrix}\delta_0&\bar{\fmap}\\ 0&\delta_1\end{bmatrix}
%\]
%respectively, where $\fmap$ is the map associated to the saddle cobordism from $K_0$ to $K_1$, and $\bar{\fmap}$ is the map associated to the inverse of this cobordism. Choose points $p$ and $q$ on $K_+$ such that they lie on opposite sides of the crossing $c$ and define
Choose points $p$ and $q$ on the opposite sides of the crossing $c$, as in Figure \ref{fig:cross} and define

\begin{equation}\label{def:chainmaps}
\begin{array}{ll}
f_c^+:\CBn(K_+)\to\CBn(K_-)&f_c^-:\CBn(K_-)\to\CBn(K_+)\\
f_c^+(a_0,a_1)=\left((\x_p+\x_q)(a_1),a_0\right),&f_c^-(a_1,a_0)=\left((\x_p+\x_q)(a_0),a_1\right)
\end{array}
\end{equation}
where $a_i\in\CBn(K_i)$.
\begin{lem}
Both $f_c^+$ and $f_c^-$  are  chain maps.
\end{lem}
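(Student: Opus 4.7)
The plan is to verify the chain-map condition $\delta \circ f_c^+ = f_c^+ \circ \delta$ by direct expansion using the mapping-cone description of $\CBn(K_\pm)$. Write a generic element of $\CBn(K_+)$ as a pair $(a_0,a_1)$ with $a_i \in \CBn(K_i)$ and differential
\[
\delta(a_0,a_1) \;=\; \bigl(\delta_{K_0} a_0,\; \fmap(a_0) + \delta_{K_1} a_1\bigr),
\]
and analogously for $\CBn(K_-)$ with $\bar{\fmap}$ in place of $\fmap$. A direct component-by-component comparison of $\delta f_c^+(a_0,a_1)$ and $f_c^+ \delta(a_0,a_1)$, using that $\x_p$ and $\x_q$ commute with the internal differentials $\delta_{K_0}$ and $\delta_{K_1}$, shows that the chain-map condition reduces to the two identities
\[
(\x_p + \x_q)\circ \fmap \;=\; 0 \quad\text{on } \CBn(K_0), \qquad \bar{\fmap}\circ (\x_p + \x_q) \;=\; 0 \quad\text{on } \CBn(K_1).
\]

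To establish these identities, I would combine two observations. First, because $\x_p$ and $\x_q$ are chain maps on the full complexes $\CBn(K_\pm)$ and preserve the resolution vertex (each summand $\CBn(K_v)$ is preserved), every edge component of the differential --- in particular the saddle components $\fmap$ and $\bar{\fmap}$ --- commutes with $\x_p$ and $\x_q$ separately. Second, since $p$ and $q$ lie on opposite sides of $c$, exactly one of the two resolutions $K_0, K_1$ identifies the local arcs through $p$ and $q$ into a single global circle: the common circle sits in $K_1$ if the saddle from $K_0$ to $K_1$ is a merge, and in $K_0$ if it is a split. Accordingly, $\x_p + \x_q$ vanishes on $\CBn(K_1)$ in the merge case and on $\CBn(K_0)$ in the split case. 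In the merge case, both identities are immediate because the image of $\fmap$ and the domain of $\bar{\fmap}$ lie in $\CBn(K_1)$. In the split case, the commutativity above yields
\[
(\x_p+\x_q)\fmap \;=\; \fmap\,(\x_p+\x_q) \quad\text{and}\quad \bar{\fmap}(\x_p+\x_q) \;=\; (\x_p+\x_q)\,\bar{\fmap},
\]
and the right-hand sides vanish because in each of them $\x_p + \x_q$ is applied to elements of $\CBn(K_0)$, on which it is zero.

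The argument for $f_c^-$ is entirely analogous: expanding its chain-map condition yields the same two identities with the roles of $\fmap$ and $\bar{\fmap}$ interchanged, so the same verification applies. The only non-formal step in the whole proof is the geometric observation that $p$ and $q$, sitting on opposite sides of $c$, are forced onto a common global circle in exactly one of the two resolutions; once this is granted, no explicit computation at the level of the Frobenius algebra $\alg$ is required.
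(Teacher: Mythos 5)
Your proposal is correct and follows the same route as the paper: expand the chain-map condition using the mapping-cone description of $\CBn(K_\pm)$, observe that $\x_p, \x_q$ commute with the internal differentials $\delta_0, \delta_1$, and thereby reduce everything to the two identities $(\x_p+\x_q)\circ\fmap = 0$ and $\bar{\fmap}\circ(\x_p+\x_q) = 0$. The paper asserts these two identities without proof, whereas you supply a justification; that added argument is correct and worthwhile.

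One point to tighten: as written, the case split into a ``merge case'' and a ``split case'' reads as if the saddle from $K_0$ to $K_1$ is globally a merge or globally a split, and as if one of $K_0$, $K_1$ has $p$ and $q$ on a single ``global circle.'' But $K_0$ and $K_1$ still have $n-1$ crossings, and whether the saddle merges or splits --- and in which of the two local resolutions $p$ and $q$ lie on a common circle --- depends on the complete resolution $v$ of those remaining crossings. The argument should be run summand by summand: for each vertex $v$, the edge map $\CBn(K_{0,v})\to\CBn(K_{1,v})$ is either a merge (then $p,q$ share a circle in $K_{1,v}$, so $\x_p+\x_q=0$ on $\CBn(K_{1,v})$) or a split (then $p,q$ share a circle in $K_{0,v}$, so $\x_p+\x_q=0$ on $\CBn(K_{0,v})$), and your two-case analysis then gives $(\x_p+\x_q)\fmap=0$ and $\bar{\fmap}(\x_p+\x_q)=0$ on each summand. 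With that vertex-by-vertex reading the proof is complete; as literally phrased, the geometric claim about a ``single global circle'' in $K_0$ or $K_1$ is not quite meaningful, since those are diagrams with crossings, not resolutions.
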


\begin{proof}
Let $\delta_\bullet$ denote the differential of $\CBn(K_\bullet)$ for $\bullet\in\{0,1,+,-\}$. It follows from $(\x_p+\x_q)\circ\fmap=0$ and $\bar{\fmap}\circ(\x_p+\x_q)=0$ that
  \[
    \begin{split}
      f_c^+\delta_+(a_0,a_1)&=\left((\x_p+\x_q)(\delta_1(a_1)+\fmap(a_0)),\delta_0(a_0)\right)\\
      &=\left((\x_p+\x_q)\delta_1(a_1),\delta_0(a_0)\right)= \delta_-f_c^+(a_0,a_1).
    \end{split}
  \]
  The proof for $f_c^-$ is similar.
\end{proof}
\begin{cor}\label{cor:singcross}
  With the above notation fixed, $|\wu(K_+)-\wu(K_-)|\le 1$.
  \end{cor}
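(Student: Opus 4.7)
The plan is to invoke Lemma~3.1 with $n=1$, so I need to show that $f_c^- \circ f_c^+$ is chain homotopic to multiplication by $h$ on $\CBn(K_+)$, and symmetrically for $f_c^+ \circ f_c^-$ on $\CBn(K_-)$. First I would compute directly from the definitions in Equation~\eqref{def:chainmaps}:
\[
f_c^- \circ f_c^+(a_0,a_1) = f_c^-\bigl((\x_p+\x_q)(a_1),\, a_0\bigr) = \bigl((\x_p+\x_q)(a_0),\, (\x_p+\x_q)(a_1)\bigr).
\]
The right-hand side is nothing but the diagonal action of $\x_p+\x_q$ on the mapping cone $\CBn(K_+)=\CBn(K_0)\oplus\CBn(K_1)$.

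The next step is to justify why this diagonal formula really equals the basepoint action of $\x_p+\x_q$ on $\CBn(K_+)$ itself. Since $p$ and $q$ lie away from the crossing $c$, the saddle cobordism map $\fmap:\CBn(K_0)\to\CBn(K_1)$ is supported near $c$ and therefore commutes with the basepoint actions $\x_p$ and $\x_q$. Consequently, $\x_p$ and $\x_q$ preserve the mapping cone decomposition and act summand-wise, matching precisely the formula above. This identifies $f_c^- \circ f_c^+$ with $\x_p+\x_q$ as a chain map on $\CBn(K_+)$.

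Now I would apply Lemma~2.1 to the knot diagram $K_+$ at the crossing $c$: since $p$ and $q$ lie on opposite sides of the single crossing $c$, that lemma gives a chain homotopy between $\x_p+\x_q$ and multiplication by $h$. Hence $(f_c^- \circ f_c^+)_\star = h\cdot\id$ on $\HBn(K_+)$. The argument for $(f_c^+ \circ f_c^-)_\star = h\cdot\id$ on $\HBn(K_-)$ is identical, using that $p$ and $q$ lie on opposite sides of the crossing $c$ in $K_-$ as well. Applying Lemma~3.1 with $n=1$ yields $|\wu(K_+)-\wu(K_-)|\le 1$.

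The only genuine point to double-check is the commutation of the basepoint actions with $\fmap$; once that is in hand, the rest is a direct application of Lemmas~2.1 and~3.1. I do not anticipate any substantive obstacle, since this commutation is essentially the locality statement that the TQFT cobordism map for a saddle supported near $c$ only modifies circles in a small neighborhood of $c$, leaving the label at the circles containing $p$ and $q$ untouched up to the saddle's rewiring.
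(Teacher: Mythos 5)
Your proof is correct and follows the paper's own (very terse) proof exactly: compute $f_c^\mp\circ f_c^\pm=\x_p+\x_q$, invoke Lemma~\ref{changep} to conclude that the induced maps on homology are multiplication by $h$, and then apply Lemma~\ref{lem:Algver} with $n=1$. One small clarification: $\x_p+\x_q$ acts diagonally on the mapping-cone decomposition simply because basepoint actions are defined resolution-by-resolution on the cube and never move between vertices, so they automatically preserve the splitting $\CBn(K_\pm)=\CBn(K_0)\oplus\CBn(K_1)$---commuting with $\fmap$ is what makes $\x_p,\x_q$ chain maps, but is not what you need for the diagonal identity.
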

 \begin{proof}
  By Lemma~\ref{changep} the induced maps on homology by both $f_c^+\circ f_c^-$ and $f_c^-\circ f_c^+$ are equal to multiplication by $h$. Thus the claim follows from Lemma~\ref{lem:Algver}.
    \end{proof}

    \begin{proof}(Theorem~\ref{thm:lbunknotting}) Consider an oriented diagram for $K$ such that we get an oriented diagram for the unknot after switching $N$ crossings $\{c_1,...,c_N\}$, where $N$ is the unknotting number of $K$. Abusing the notation we denote the diagram by $K$. For any $i=1,...,N$, let $K_i$ be the diagram obtained from $K$ after switching the crossings $c_1,...,c_i$. The diagrams $K_{i-1}$ and $K_i$ differ in a single crossing for each $i$, so  it follows from Corollary \ref{cor:singcross} that $|\wu(K_{i-1})-\wu(K_i)|\le 1$. Therefore, $ |\wu(K)-\wu(Unknot)|=\wu(K)\le N.$

  \end{proof}

\begin{remark}
Setting $h=1$, one may think of $(\CBn(K),\delta)$ as a filtered chain complex of $\Field$-modules, where the differential increases homological grading by $1$ and does not decrease quantum grading. This gives a spectral sequence from Khovanov homology with coefficients in $\Field$ of $K$ to $\Field\oplus \Field$, called Bar-Natan spectral sequence \cite{Tur-BN}. If this spectral sequence collapses in the n-th page, then $\wu(K)=n-1$.
\end{remark}

\section{A geometric interpretation of chain maps}\label{sec:cobdescrip}

Suppose $K$ and $K'$ are oriented pointed knots i.e. oriented knots with marked points on them, such that $K'$ is obtained from $K$ by a sequence of crossing changes. To any such sequence, Eftekhary and the author associate a decorated cobordism from $K$ to a connected sum of $K'$ with some right- or left-handed Hopf links. Then, by the corresponding cobordism maps for knot Floer homology~\cite[Section 8.2]{AE-2}, we define chain maps between the knot Floer chain complexes of $K$ and $K'$ satisfying the assumptions of Lemma~\ref{lem:Algver} ~\cite[Section 8.3]{AE-2}. As a result, one gets a lower bound for the unknotting number in term of the \emph{u-torsion} in knot Floer homology. 

Following the approach in~\cite{AE-2} one may use cobordism maps for Bar-Natan homology to define chain maps between $\CBn(K)$ and $\CBn(K')$ which satisfy the assumptions of Lemma~\ref{lem:Algver}. The goal of this section is to show that for any crossing change process, these chain maps are equal to the ones defined in Section \ref{unknotting-bound}.

As before, let $K_+$ be an oriented knot diagram with a specific positive crossing $c$, and $K_-$ be the oriented knot diagram obtained from $K_+$ by changing $c$ into a negative crossing. As in Figure~\ref{fig:RII}, after a Reidemeister II move near the crossing $c$ on $K_+$, followed by an oriented saddle move, one gets a diagram for $K_-\#H$. Here $H$ is the right-handed Hopf link. 

\begin{figure}[ht]
\centering
\def\svgwidth{13cm}
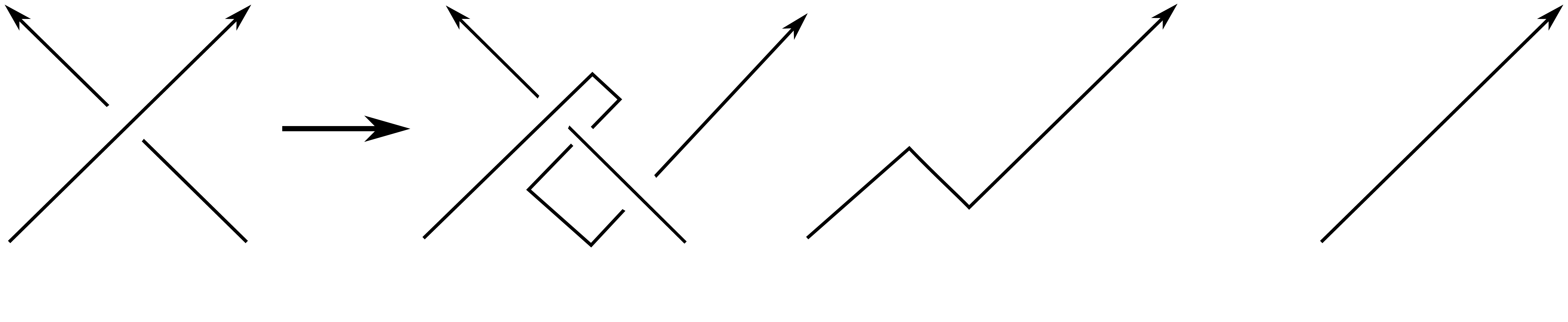
\caption{}\label{fig:RII}
\end{figure}  

As in Figure~\ref{fig:RII}, we denote the knot diagram obtained from $K_+$ by the specified Reidemeister II move by $\ti{K}_+$.   Further, let $\mathsf{h}:\CBn(K_+)\to\CBn(\ti{K}_+)$ and $\ti{\mathsf{h}}:\CBn(\ti{K}_+)\to\CBn(K_+)$ be the chain homotopy equivalences corresponding to this move as defined in~\cite[Section 4.3]{BarN}. For the reader's convenience, we recall the definition of $\mathsf{h}$ and $\ti{\mathsf{h}}$. For $i,j=0,1$, let $\CBn^{ij}(\ti{K}_+)$ denotes the direct sum of the summands of $\CBn(\ti{K}_+)$ corresponding to vertices $v$ of the cube so that $v(c_2)=i$, $v(c_3)=j$. Also, let $\mathsf{h}^{ij}=\pi_{ij}\circ\mathsf{h}$ and $\ti{\mathsf{h}}^{ij}=\ti{\mathsf{h}}\circ\imath_{ij}$ where $\pi_{ij}$ is the projection of $\CBn(\ti{K}_+)$ on $\CBn^{ij}(\ti{K}_+)$ and $\imath_{ij}$ is the inclusion of $\CBn^{ij}(\ti{K}_+)$ in $\CBn(\ti{K}_+)$. Then, $\mathsf{h}^{00}=\mathsf{h}^{11}=\ti{\mathsf{h}}^{00}=\ti{\mathsf{h}}^{11}=0$ and $\mathsf{h}^{10}=\ti{\mathsf{h}}^{10}=\mathrm{id}$. Further, $\mathsf{h}^{01}=g\otimes x_+$, where $g$ is the cobordism map corresponding to the saddle move as in Figure \ref{fig:RIIhom} i.e. $\mathsf{h}^{01}$ is a chain map for the cobordism which is union of a saddle and a cup.  Finally, $\ti{\mathsf{h}}^{01}$ is the cobordism map for the inverse of the saddle move in Figure \ref{fig:RIIhom} union a cap. 

\begin{figure}[ht]
\centering
\def\svgwidth{5.5cm}
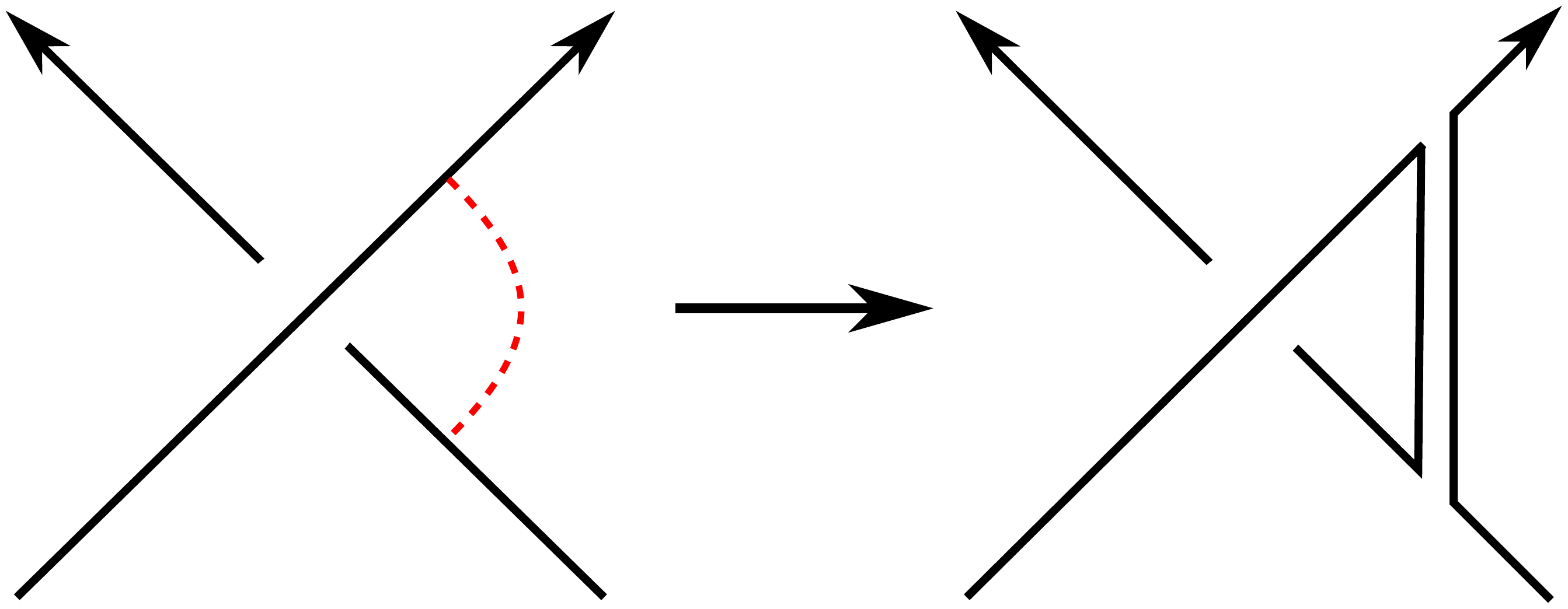
\caption{}\label{fig:RIIhom}
\end{figure}  

Let $\fmap:\CBn(\ti{K}_+)\to\CBn(K_-\#H)$ and $\bar{\fmap}:\CBn(K_-\#H)\to\CBn(\ti{K}_+)$ denote the cobordism maps for the saddle move in Figure \ref{fig:RII}. 

\begin{thm}
With the above notation fixed, $f_c^+=\mathsf{p}\circ \fmap\circ \mathsf{h}$ and $f_{c}^-=\ti{\mathsf{h}}\circ \bar{\fmap}\circ\mathsf{i}$, where $\mathsf{i}$ and $\mathsf{p}$ are the chain maps defined in (\ref{def:Hopfmaps}).
\end{thm}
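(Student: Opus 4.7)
The plan is to verify each equality by expanding both sides along the mapping cone decompositions dictated by the relevant crossings, and then matching components. We handle the two equalities in parallel.

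First I will set up the bookkeeping. The domain $\CBn(K_+)$ is the mapping cone of the saddle $\fmap_c:\CBn(K_0)\to\CBn(K_1)$ at the crossing $c$, and $\CBn(K_-)$ is the cone of $\bar\fmap_c:\CBn(K_1)\to\CBn(K_0)$. The intermediate complex $\CBn(\ti{K}_+)$ is a two-direction cube indexed by the resolution labels $(i,j)$ at $c_2,c_3$; its $(1,0)$-face is canonically isomorphic to $\CBn(K_+)$ (the RII move cancels), while the $(0,1)$-face is $\CBn(K_+)\otimes_{\Field[h]}\alg$ with the extra circle coming from the RII move. On the other side, $\CBn(K_-\#H)$ carries the cube structure from Section~\ref{sec:sumHopf} indexed by the two Hopf crossings, which we identify with $(i,j)$ via the saddle taking $\ti{K}_+$ to $K_-\#H$. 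Under this identification $\fmap$ acts diagonally on the cube, with $\fmap^{10}$ being the identification $\CBn(K_+)\to\CBn(L_{10})=\CBn(K_-)$ and $\fmap^{01}$ being the saddle $\fmap_c$ tensored with the identity on $\alg$.

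Next I will compute $\mathsf{p}\circ\fmap\circ\mathsf{h}$ on a general element $(a_0,a_1)\in\CBn(K_0)\oplus\CBn(K_1)=\CBn(K_+)$. Using $\mathsf{h}^{10}=\id$, $\mathsf{h}^{01}=g\otimes x_+$ (with $g$ the RII-saddle), and $\mathsf{h}^{00}=\mathsf{h}^{11}=0$, the image $\mathsf{h}(a_0,a_1)$ has contributions only on the $(1,0)$ and $(0,1)$ faces. Applying $\fmap$ and then $\mathsf{p}=p+r$ from~(\ref{def:Hopfmaps}), the $(1,0)$-contribution passes through $\mathsf{p}$ via $r$ and produces $a_0$, which is exactly the second component of $f_c^+(a_0,a_1)$. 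The $(0,1)$-contribution is of the form $(g(a_0,a_1))\otimes x_+$; after $\fmap$ it becomes a saddle applied to $a_1$ tensored with $x_+$ at the $(0,1)$-Hopf vertex, and after $\mathsf{p}$ (the $s$-part) it gives a chain in $\CBn(K_1)$ which, using the Frobenius relations of $\alg$ recorded in the proofs of Lemmas~\ref{Comp-Cob-vs-Inv} and~\ref{changep}, equals $(\x_p+\x_q)(a_1)$ on the nose. Matching components yields $\mathsf{p}\circ\fmap\circ\mathsf{h}=f_c^+$.

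The argument for $f_c^-=\ti{\mathsf{h}}\circ\bar\fmap\circ\mathsf{i}$ runs symmetrically, replacing $H$ by $mH$ and using the conjugate formulas $\ti{\mathsf{h}}^{10}=\id$, $\ti{\mathsf{h}}^{01}$ given by the inverse RII-saddle-with-cap, and $\mathsf{i}$ as in~(\ref{def:Hopfmaps}). The role of the identity and saddle components in the cube is interchanged, and the relation $m\Delta(x_\pm)=hx_\pm$ together with the split-merge identities produce the same $\x_p+\x_q$ factor in the appropriate component.

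The main obstacle will be the on-the-nose (not just up to homotopy) identification of the $(0,1)$-contribution with the operator $\x_p+\x_q$: one must check that the two cobordism pictures (RII-saddle glued to the Hopf-link saddle, then projected by the $s$- or $r$-part of $\mathsf{p}$ and $\mathsf{i}$) reduce, Frobenius-algebra relation by Frobenius-algebra relation, to the explicit $x_-$-multiplication at the two points $p$ and $q$ sitting on opposite sides of $c$. Once this identification is carried out carefully, the equalities follow by comparing the remaining mapping-cone components, each of which reduces either to the identity or to zero.
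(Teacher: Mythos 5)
Your overall strategy---expanding $\mathsf{p}\circ\fmap\circ\mathsf{h}$ and $\ti{\mathsf{h}}\circ\bar{\fmap}\circ\mathsf{i}$ along the cube decompositions for the crossings involved and matching mapping-cone components---is the same as the paper's, but your setup contains an error that would make the computation collapse. You identify the two Hopf crossings of $K_-\#H$ with the two Reidemeister~II crossings ``via the saddle.'' In fact the clasp of the Hopf component in $K_-\#H$ is formed by the \emph{original} crossing $c$ together with one of the RII crossings; the other RII crossing becomes the crossing of $K_-$ along which the target mapping cone is read off. This matters because $\mathsf{p}=p+r$ is supported only on the components $L_{00}$ and $L_{11}$ (both Hopf crossings resolved the same way), while the nonzero faces of $\mathsf{h}$ are exactly the $(1,0)$ and $(0,1)$ faces of the RII crossings. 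Under your identification these land in $L_{10}$ and $L_{01}$ and are annihilated by $\mathsf{p}$, so your composite would be zero, not $f_c^+$. With the correct identification, $\mathsf{h}^{01}(a_0)=\Delta_p(a_0)\otimes x_+$ is carried by $\fmap$ into $L_{00}$ and $\mathsf{h}^{10}(a_1)$ into $L_{11}$, and the surviving terms are $p(\Delta_p(a_0))=a_0$ and $r(\Delta_q(a_1))=(\x_p+\x_q)(a_1)$.

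Two further points. First, there is no ``$s$-part of $\mathsf{p}$'': $\mathsf{p}=p+r$, while $s$ is a component of $\mathsf{i}=i+s$; and your assignment of which face yields which component is reversed relative to what the formulas actually give (the $(0,1)$-face, through $p$, yields $a_0$; the $(1,0)$-face, through $r$, yields $(\x_p+\x_q)(a_1)$, using $\Delta_q(a_1)=a_1\otimes x_-+(\x_q(a_1)+ha_1)\otimes x_+$ and cancellation of the $ha_1$ terms mod $2$). Second, the step you defer as ``the main obstacle''---verifying on the nose that the surviving term is $(\x_p+\x_q)(a_1)$---is precisely the content of the proof and is left undone; and for the second identity the theorem uses the \emph{same} cobordism to $K_-\#H$ read backwards via $\mathsf{i}$ and $\bar{\fmap}$, not the $K_+\#mH$ cobordism, which is only the subject of the remark following the theorem.
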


\begin{proof}
We prove both equalities by looking at the cube of resolutions for the three crossings $c_1$, $c_2$ and $c_3$. 
For $i,j,k=0,1$, let $\CBn^{ijk}(\bullet)$ denotes the direct sum of the summands of $\CBn(\bullet)$ corresponding to vertices $v$ of the cube so that $v(c_1)=i$, $v(c_2)=j$ and $v(c_3)=k$ for $\bullet=\ti{K}_+,K_-\#H$. 
Similarly, $\CBn^{i}(K_+)$ and $\CBn^i(K_-)$ denote the summands of $\CBn(K_+)$ and $\CBn(K_-)$,  respectively, corresponding to the $i$-resolution at $c$. 

Assume $a=(a_0,a_1)$ be an element in $\CBn(K)$ so that $a_i\in\CBn^i(K_+)$. It follows from the definition of $\mathsf{h}$ that $\mathsf{h}^{ii}(a_j)=0$ for any $i,j=0,1$. Further, considering the definition of $\mathsf{p}$, it is enough to compute $\fmap(\mathsf{h}^{01}(a_0))$ and $\fmap(\mathsf{h}^{10}(a_1))$. As in Figure \ref{fig:0res}, $\mathsf{h}^{01}(a_0)=\Delta_p(a_0)\otimes x_+$ and so 
\begin{equation}\label{eq:formu1}
\fmap(\mathsf{h}^{01}(a_0))=\Delta_p(a_0)=a_0\otimes x_-+(\x_p(a_0)+ha_0)\otimes x_+
\end{equation}

\begin{figure}[ht]
\centering
\def\svgwidth{13cm}
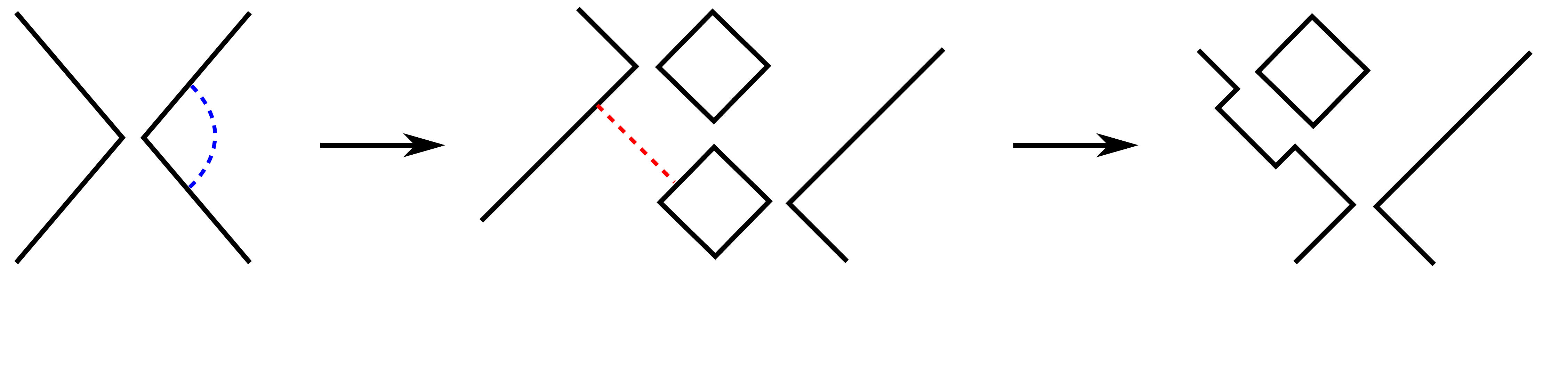
\caption{}\label{fig:0res}
\end{figure}

Similarly, Figure \ref{fig:1res} shows that $\mathsf{h}^{10}(a_1)=a_1$ and 
 \begin{equation}\label{eq:formu2}
\fmap(\mathsf{h}^{10}(a_1))=\Delta_q(a_1)=a_1\otimes x_-+(\x_q(a_1)+ha_1)\otimes x_+
\end{equation}
Thus, by equalities (\ref{eq:formu1}) and (\ref{eq:formu2}) we have $\mathsf{p}(\fmap\circ \mathsf{h}(a))=(\x_p(a_1)+\x_q(a_1),a_0)$.

\begin{figure}[ht]
\centering
\def\svgwidth{13cm}
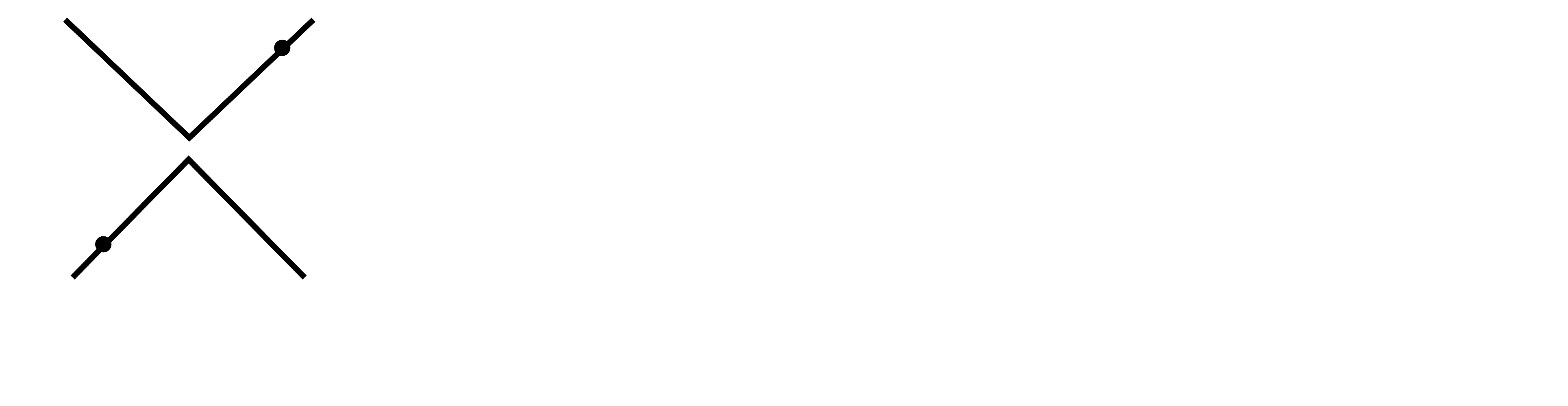
\caption{}\label{fig:1res}
\end{figure}

Let $a=(a_0,a_1)$ be an element in $\CBn(K_-)$ where $a_i\in\CBn^i(K_-)$ for $i=0,1$. Denote the components of $\mathsf{i}(a_\star)$ in $\CBn^{jk\star}(K_-\#H)$ by $\mathsf{i}^{jk}(a)$. It follows from the definitions of $\ti{\mathsf{h}}$ and $\mathsf{i}$ that to compute 
$\ti{\mathsf{h}}(\bar{\fmap}\circ \mathsf{i}(a))$, it is enough to compute:
\[\bar{\fmap}(\mathsf{i}^{11}(a_0))=\bar{\fmap}(a_0\otimes x_+)=a_0\]
and 
\[\bar{\fmap}(\mathsf{i}^{00}(a_1))=\bar{\fmap}(\x_q(a_1)\otimes x_++a_1\otimes x_-)=\Delta_q(\x_q(a_1)\otimes x_++a_1\otimes x_-).\]
It is not hard to see that $\ti{\mathsf{h}}(\bar{\fmap}\circ\mathsf{i}^{11}(a_0))=a_0$ and $\ti{\mathsf{h}}(\bar{\fmap}(\mathsf{i}^{00}(a_1)))=\x_p(a_1)+\x_q(a_1)$. This completes the proof. 
\end{proof}
\begin{remark}
Similarly, one may change $K_-$ by a Reidemeister II move near $c$ to get a diagram $\ti{K}_-$, so that $K_+\#mH$ is obtained from $\ti{K}_-$ by a saddle move. Let $\mathsf{h}:\CBn(K_-)\to\CBn(\ti{K}_-)$ and $\ti{\mathsf{h}}:\CBn(\ti{K}_-)\to\CBn(K_-)$ be the corresponding chain homotopy equivalences and $\fmap:\CBn(\ti{K}_-)\to \CBn(K_+\#mH)$ and $\bar{\fmap}:\CBn(K_+\#mH)\to\CBn(\ti{K}_-)$ be the cobordism maps for the saddle move. Then, by the same argument, one can show that $f_c^-=\mathsf{p}\circ \fmap\circ \mathsf{h}$ and $f_{c}^+=\ti{\mathsf{h}}\circ \bar{\fmap}\circ\mathsf{i}$. \end{remark}

\section{Examples}\label{sec:Exam}
The Rasmussen's s invariant gives a lower bound for the slice genus, $|s(K)|/2$, and thus the unknotting number \cite{Ras-sinv}. We used Cotton Seed's package, Knotkit \cite{S-knotkit}, to compute $\wu$ and s (defined using the Bar-Natan spectral sequence) for some knots with more than 12 crossings. We found some example that $\wu$ is a better lower bound comparing to $|s|/2$, for instance, $|s|/2$ for the knots $13n689, 13n1166, 13n2504$ and $13n2807$ is equal to $1$, while $\wu$ is equal to $2$.

%\newpage
%----------------------------------------------

\bibliographystyle{alpha}
\bibliography{HFbibliography}

\end{document}